\numberwithin{equation}{section}
\theoremstyle{plain}
\newtheorem{theorem}{Theorem}[section]
\newtheorem{lemma}[theorem]{Lemma}
\newtheorem{corollary}[theorem]{Corollary}
\newtheorem{conjecture}[theorem]{Conjecture}
\theoremstyle{definition}
\newtheorem{definition}[theorem]{Definition}
\theoremstyle{remark}
\newtheorem{case[theorem]}{Case}
\def\bc{\begin{corollary}}
\def\ec{\end{corollary}}
\def\be{\begin{equation}}
\def\ee{\end{equation}}
\def\bast{\begin{eqnarray*} }
\def\east{\end{eqnarray*} }
\def\bea{\begin{eqnarray}}
\def\eea{\end{eqnarray}}
\def\R{\mathbb R}
\def\R{\Bbb R}
\def\D{\Bbb D}
\def\A{\mathcal{A}}
\def\N{\Bbb N}
\def\({\left(}
\def\){\right)}
\def\[{\left[}
\def\]{\right]}
\def\<{\left\langle}
\def\>{\right\rangle}
\author{M. Dannenberg, W. Hagerstrom, G. Hart, A. Iosevich, T. Le, I. Li and N. Skerrett}
\address{Department of Mathematics, University of Rochester, Rochester, NY}
\email{iosevich@math.rochester.edu}
\thanks{The fourth listed author is supported in part by the National Science Foundation grant no. HDR TRIPODS - 1934962 and NSF DMS 2154232. Additional funding for this project was provided by the Schwartz Discover Grant at the University of Rochester. This paper was written under the auspices of Tripods2023 undergraduate research program for undergraduate students.}
\thanks{}
\begin{document}

\title{Buffon Needle Problem Over Convex Sets}

\maketitle

\begin{abstract}
We solve a variant of the classical Buffon Needle problem. More specifically, we inspect the probability that a randomly oriented needle of length $l$ originating in a bounded convex set $X\subset\mathbb{R}^2$ lies entirely within $X$. Using techniques from convex geometry, we prove an isoperimetric type inequality, showing that among sets $X$ with equal perimeter, the disk maximizes this probability.
\end{abstract}

\maketitle

\section{Introduction} 

\vskip.125in 

The isoperimetric inequality in the plane says that along all the sets of perimeter $2 \pi$, the one that maximizes the area is the unit disk. Steiner (\cite{Steiner1838}) made the first progress towards proving this result. He showed that if the maximizing shape exists, it must be the unit disk. The first rigorous proof was given by Hurwitz in 1902 (see \cite{osserman1978isoperimetric} and the references contained therein). This deep and interesting problem lends itself to many variations. 

The version of the isoperimetric inequality we study in this paper, described in detail in Theorem \ref{main} below, is the following. Suppose that a needle of sufficiently small positive length is dropped in a convex set of perimeter $2 \pi$ such that one end of the needle hits any point of the set with uniform probability. We wish to maximize the probability, denoted by Buffon probability, that the other end of the needle is also in the set. We show that if the convex set under consideration is not a disk, and the needle is sufficiently small, then the Buffon probability of this set is smaller than the corresponding Buffon probability of the unit disk. As the reader shall see, our estimates are quantitative. 

\section{Definitions}

\begin{definition}
    Let $x\in\R^2$, and $l>0$. A \textbf{random needle} of length $l$ at $x$ is a directed line segment originating at $x$ whose orientation with respect to the horizontal axis is chosen uniformly from $[0,2\pi)$. A random needle of length 0 at $x$ is the point $x$.
\end{definition}

\begin{definition}
    Let $X\subset\R^2$ be a bounded convex set. The \textbf{pointwise probability} $p_X(x,l)$, $p_X:X\times\R_{\geq0}\to[0,1]$, is the probability that a random needle of length $l$ at $x$ lies within $X$. The \textbf{Buffon probability} $P_X(l)$, $P_X:\R_{\geq0}\to[0,1]$, is the probability that a random needle of length $l$ at $x$ chosen uniformly from $X$ lies within $X$.
\end{definition}

Denote open ball of radius $r$ at $p$ by $B_r(p)$. Recall that for two sets $A,B$, the Minkowski sum is defined by\begin{align*}
    A+B=\{a+b:a\in A,b\in B\}
\end{align*}and the Minkowski difference is defined by\begin{align*}
    A-B=(A^\complement+(-B))^\complement
\end{align*}where $-B=\{-b:b\in B\}$.

\begin{definition}
    Let $X\subset\R^2$ be a bounded convex set. The \textbf{exterior parallel} $X^l$ of $X$ by $l>0$ is the Minkowski sum $X+\overline{B_l(0)}$. The \textbf{interior parallel} $X_l$ of $X$ by $l>0$ is the Minkowski difference $X-\overline{B_l(0)}$.
\end{definition}

Also, we denote area by $\A$ and length by $\ell$.

\section{Main Results}

Our main result is the following isoperimetric type inequality.

\begin{theorem}{\label{main}}
    Let $\D$ be the unit disk. For any compact, convex set $X\in\R^2$ with perimeter $\ell(\partial X)=2\pi$ where $X$ is not a disk, there exists an $\epsilon>0$ such that $P_X(l)<P_\D(l)$ for $l\in(0,\epsilon)$.
\end{theorem}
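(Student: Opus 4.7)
The plan is to derive a first-order asymptotic expansion of $P_X(l)$ as $l\to 0^+$ and reduce the theorem to the classical planar isoperimetric inequality. Write $q_X(x,l)=1-p_X(x,l)$ and $Q_X(l)=\int_X q_X(x,l)\,dx$, so $P_X(l)=1-Q_X(l)/\A(X)$. Since $P_X(0)=P_\D(0)=1$, it suffices to compare the right derivatives of $P_X$ and $P_\D$ at $l=0$, equivalently the leading-order behavior of $Q_X(l)/\A(X)$ and $Q_\D(l)/\pi$. Exchanging the order of integration in the angular average defining $q_X$,
\[
Q_X(l)=\frac{1}{2\pi}\int_0^{2\pi}\A\bigl(X\setminus(X-lv(\theta))\bigr)\,d\theta,\qquad v(\theta)=(\cos\theta,\sin\theta),
\]
where $X-lv(\theta)$ denotes the translate of $X$ by $-lv(\theta)$.

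Next I would slice $X$ by lines parallel to $v(\theta)$. If the fiber of $X$ over the coordinate $u$ on the line $v(\theta)^\perp$ has length $L(u)$, then a point with fiber coordinate $s$ lies in $X\setminus(X-lv(\theta))$ precisely when $s$ lies in the trailing subinterval of length $\min(l,L(u))$, so
\[
\A\bigl(X\setminus(X-lv(\theta))\bigr)=l\cdot w(v(\theta)^\perp)-\int_{\{u:L(u)<l\}}(l-L(u))\,du,
\]
where $w$ is the width function of $X$. The error is bounded by $l\cdot|\{u:L(u)<l\}|$; concavity of $L$ together with compactness of $X$ ensures this measure tends to $0$ as $l\to 0$ uniformly in $\theta$, so the error is $o(l)$ after averaging. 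Applying Cauchy's perimeter formula $\int_0^{2\pi}w(\theta)\,d\theta=2\ell(\partial X)$ then yields
\[
Q_X(l)=\frac{l\,\ell(\partial X)}{\pi}+o(l).
\]

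Substituting $\ell(\partial X)=2\pi$ and $\A(\D)=\pi$ gives
\[
P_\D(l)-P_X(l)=\frac{2l(\pi-\A(X))}{\pi\,\A(X)}+o(l).
\]
By the planar isoperimetric inequality, $\A(X)\le\pi$ with equality only when $X$ is the unit disk, so for $X$ not a disk the coefficient of $l$ above is a strictly positive constant depending only on $X$. Choosing $\epsilon>0$ small enough that the $o(l)$ remainder is dominated by this leading term yields $P_X(l)<P_\D(l)$ on $(0,\epsilon)$, as desired. The principal technical difficulty is making the $o(l)$ error in the slicing step uniform in $\theta$ so that it survives the angular average; this reduces to a quantitative claim that the short-fiber measure $|\{u:L(u)<l\}|$ shrinks at a rate independent of direction, which follows from concavity of the fiber-length function together with the diameter bound on $X$.
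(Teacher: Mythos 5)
Your argument is correct and reaches the stated conclusion by a genuinely different route from the paper. The paper splits $X$ into the inner parallel body $X_l$ (where $p_X\equiv 1$) and the collar $X\setminus X_l$, bounds $p_X$ on the collar by a tangent-line convexity argument ($p_X(x,l)\le\frac12+\frac1\pi\arcsin(\dist(x,\partial X)/l)$, integrated over level sets $\partial X_t$), and controls both pieces with Steiner's formula applied to $(X_l)^l$; the isoperimetric inequality then enters exactly as in your proof, through the strict positivity of $\frac{2}{\A(X)}-\frac{2}{\pi}$. You instead integrate over the direction first, identify $\int_X(1-p_X)\,dx$ with the angular average of $\A(X\setminus(X-lv(\theta)))$, evaluate that area by Cavalieri slicing, and sum over directions via Cauchy's projection formula. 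Your route buys the sharper two-sided asymptotic $P_X(l)=1-\frac{\ell(\partial X)}{\pi\A(X)}\,l+o(l)$, where the paper only establishes the one-sided upper bound it needs, and it avoids inner parallel bodies entirely. The uniformity issue you flag does go through as you suggest: concavity of the fiber-length function $L$ gives $|\{u:0<L(u)<l\}|\le 2l\,\mathrm{diam}(X)^2/\A(X)$, so the correction term is $O(l^2)$ uniformly in $\theta$. The one case you omit is the degenerate one $\A(X)=0$: a line segment of length $\pi$ is a compact convex non-disk of perimeter $2\pi$, your expansion (and the displayed formula with $\A(X)$ in the denominator) is meaningless there, and your uniformity bound also degenerates; but $P_X(l)=0$ for $l>0$ in that case, so the claim is trivial. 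The paper dispatches this case in one line at the outset, and you should do the same.
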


Our proof of this theorem requires three lemmas. First, due to the symmetries of the disk, $P_\D(l)$ can be exactly computed.

\begin{lemma}{\label{disk}}
    Let $\D$ be the unit disk and $0<l\leq2$. Then\begin{align*}
        P_\D(l)=\frac{2}{\pi}\(\arccos\(\frac{l}{2}\)-\frac{l}{2}\sqrt{1-\frac{l^2}{4}}\).
    \end{align*}
\end{lemma}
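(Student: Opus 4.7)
The plan is to exploit the rotational symmetry of $\mathbb{D}$ by interchanging the order of integration over the needle basepoint and the needle direction. Write
\begin{align*}
P_\mathbb{D}(l)=\frac{1}{\A(\mathbb{D})}\int_\mathbb{D} p_\mathbb{D}(x,l)\,dx=\frac{1}{\pi}\cdot\frac{1}{2\pi}\int_0^{2\pi}\int_\mathbb{D}\mathbf{1}\bigl[x+l(\cos\theta,\sin\theta)\in\mathbb{D}\bigr]\,dx\,d\theta,
\end{align*}
which re-expresses the Buffon probability as the normalized Lebesgue measure of the set of compatible (basepoint, direction) pairs.

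For each fixed $\theta$, set $v_\theta=(\cos\theta,\sin\theta)$. The inner integral equals $\A\bigl(\mathbb{D}\cap(\mathbb{D}-l v_\theta)\bigr)$, i.e., the area of the intersection of two unit disks whose centers are at distance $l$ apart. Since congruence of the configuration depends only on $l$ and not on $\theta$, this inner integral is independent of $\theta$. The first main step, therefore, is to recognize this $\theta$-invariance so that the outer integral over $\theta$ simply multiplies by $2\pi$.

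The second step is to compute the classical lens area $A(l)$, the intersection of two unit disks whose centers are distance $l$ apart. I would slice vertically through the perpendicular bisector of the two centers, expressing the lens as twice a circular segment: $A(l)=2\arccos(l/2)-\frac{l}{2}\sqrt{4-l^2}$. This can be derived either by integrating $2\sqrt{1-t^2}$ from $l/2$ to $1$ and doubling, or by subtracting the triangle area $\tfrac{l}{2}\sqrt{1-l^2/4}$ from the circular sector of angle $\arccos(l/2)$ and doubling. The assumption $0<l\le 2$ ensures that the two disks still overlap and the formula is valid.

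Substituting back gives
\begin{align*}
P_\mathbb{D}(l)=\frac{1}{2\pi^2}\cdot 2\pi\cdot A(l)=\frac{A(l)}{\pi}=\frac{2}{\pi}\(\arccos\(\frac{l}{2}\)-\frac{l}{2}\sqrt{1-\frac{l^2}{4}}\),
\end{align*}
which is the claimed identity. I do not expect any real obstacle here: the rotational symmetry collapses the angular integration to a constant, and the remaining work is the standard circular-segment computation. The only point to handle carefully is the boundary case $l=2$, where the lens degenerates to a point and both sides vanish, and the normalization $1/(2\pi\cdot\pi)$ coming from the product of the uniform angle distribution and the uniform distribution over $\mathbb{D}$.
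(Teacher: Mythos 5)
Your proposal is correct and follows essentially the same route as the paper: both reduce to a single needle direction by rotational symmetry and then compute the area of the set of admissible basepoints, which is the lens $\D\cap(\D-lv_\theta)$. The paper evaluates that area by a direct iterated integral over the half-disk rather than quoting the circular-segment formula, but the computation is the same integral of $2\sqrt{1-t^2}$ in disguise.
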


The second lemma applies a bound to the pointwise probability. Illustrated below for a convex set with smooth boundary, the red curve is a boundary of $X$, the orange curve is the circle of radius $l$ centered at $x$, and the blue curve is the tangent of $\partial A$ at the point closest to $x$. It can be seen that the convexity of $X$ bounds $p_X(x,l)$ to the proportion of the orange circle below the tangent line.

\begin{figure}[h!]
    \centering
    \includegraphics[width=0.8\linewidth]{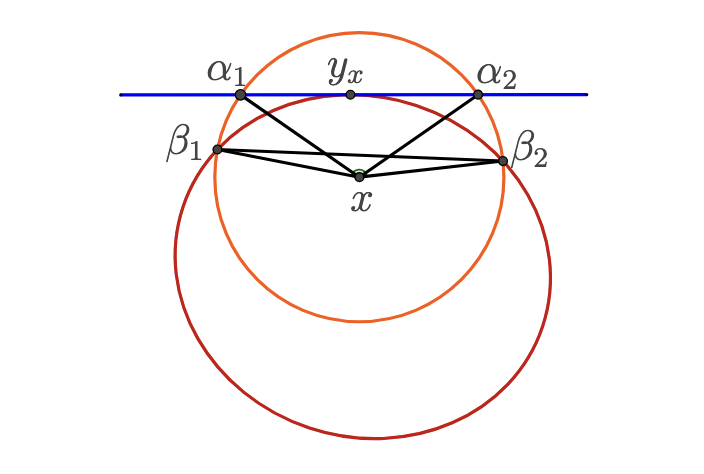}
    \caption{}
    \label{fig:1}
\end{figure}

An adaptation of this argument that handles convex sets without tangents yields the following bound.

\begin{lemma}{\label{pointwise}}
    Let $X\subset\R^2$ be a bounded convex set with perimeter $2\pi$. For $l>0$, we have\begin{align*}
        \int_{X\setminus X_l}p_X(x,l)dx\leq2\pi l-2l.
    \end{align*}
\end{lemma}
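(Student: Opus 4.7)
The plan is to combine a geometric upper bound on $p_X(x,l)$ coming from a supporting line at the nearest boundary point with a coarea decomposition of the annular region $X \setminus X_l$ by distance to $\partial X$.

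First I would prove the pointwise estimate
$$p_X(x,l) \;\leq\; \tfrac{1}{2} + \tfrac{1}{\pi}\arcsin\!\left(\tfrac{d(x)}{l}\right), \qquad d(x):=\dist(x,\partial X),$$
for every $x \in X \setminus X_l$. This is the rigorous version of the picture: fix $x$ and let $y \in \partial X$ realize $|x-y| = d(x) \leq l$. By the supporting hyperplane theorem (used in place of a tangent line so that corners of $\partial X$ cause no trouble), there is a line $L$ through $y$ with $X$ contained in one closed half-plane $H$ bounded by $L$; the segment $xy$ is orthogonal to $L$ by the minimum-distance property. The needle at $x$ must at least lie in $H$; placing $x$ at the origin with $L = \{v = d(x)\}$, the orientation $\theta$ gives a segment lying in $H$ iff $\sin\theta \leq d(x)/l$, and the set of such $\theta \in [0,2\pi)$ has Lebesgue measure $\pi + 2\arcsin(d(x)/l)$. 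Dividing by $2\pi$ yields the stated bound.

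Next I would integrate this estimate by coarea applied to the $1$-Lipschitz function $d$, whose (essentially) unit gradient foliates $X \setminus X_l$ by the level sets $\{d = s\} = \partial X_s$:
$$\int_{X\setminus X_l} p_X(x,l)\,dx \;\leq\; \int_0^l \left[\tfrac{1}{2} + \tfrac{1}{\pi}\arcsin(s/l)\right] \ell(\partial X_s)\,ds.$$
Since $X$ is convex, each inner parallel $X_s$ is convex and contained in $X$, so $\ell(\partial X_s) \leq \ell(\partial X) = 2\pi$. Bounding by $2\pi$ and changing variables $u = s/l$, the one-variable integral reduces, via $\int_0^1 \arcsin u\, du = \tfrac{\pi}{2} - 1$, to
$$2\pi\int_0^l \left[\tfrac{1}{2} + \tfrac{1}{\pi}\arcsin(s/l)\right] ds \;=\; \pi l + 2l\!\left(\tfrac{\pi}{2} - 1\right) \;=\; 2\pi l - 2l,$$
exactly matching the claim.

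The main obstacle is establishing the pointwise bound when $\partial X$ has no tangents, and this is what the paragraph preceding the lemma flags; replacing tangents by supporting lines resolves it uniformly for all compact convex sets. A secondary technical point is that $X_s$ may become empty once $s$ exceeds the inradius of $X$, but then $\ell(\partial X_s) = 0$ and the integrand simply vanishes, so the bound $\ell(\partial X_s) \leq 2\pi$ remains valid and no extra argument is required.
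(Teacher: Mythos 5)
Your proposal is correct and follows essentially the same route as the paper: the same pointwise bound $p_X(x,l)\leq\frac{1}{2\pi}\left(\pi+2\arcsin(d(x)/l)\right)$, the same layering of $X\setminus X_l$ by the level sets $\partial X_t$ of the distance function, the same use of $\ell(\partial X_t)\leq 2\pi$, and the same final integral. The only (cosmetic) difference is that you obtain the pointwise bound cleanly from a supporting line at the nearest boundary point, whereas the paper reaches the same half-plane conclusion by a direct contradiction argument about the arc above the chord through $y_x$.
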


In the final step, we split $X$ into $X_l$ and $X\setminus X_l$. Using the third lemma (an extension of Steiner's formulae \cite{SteinForm}), we obtain an upper bound on $\A(X_l)/\A(X)$ in terms of $l$, $\A(X)$, and $\ell(\partial(X_l)^l)$.

\begin{lemma}{\label{steiner}}
    Let $X$ be a bounded convex subset of $\R^2$ and $r>0$. Then\begin{align*}
        \A(X)&\geq\A((X_r)^r)=\pi r^2+\ell(\partial X_r)r+\A(X_r), \\
        \ell(\partial X)&\geq\ell(\partial(X_r)^r)=2\pi r+\ell(\partial X_r),
    \end{align*}and if $X$ is not a line segment,\begin{align*}
        \lim_{r\to0}\ell(\partial(X_r)^r)=\lim_{r\to0}\ell(\partial X_r)=\ell(\partial X).
    \end{align*}
\end{lemma}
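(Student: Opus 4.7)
My plan is to dispatch the three claims separately: the equalities via the classical Steiner formulae, the inequalities via the inclusion $(X_r)^r \subseteq X$, and the limit via Hausdorff convergence of $X_r$ to $X$. For the equalities, the key observation is that $X_r$ is itself a bounded convex set (a Minkowski difference of a convex set with a disk is convex), so the usual planar Steiner formulae apply to $K = X_r$, giving $\A(K + \overline{B_r(0)}) = \A(K) + \ell(\partial K)\,r + \pi r^2$ and $\ell(\partial(K + \overline{B_r(0)})) = \ell(\partial K) + 2\pi r$. These are exactly the stated expressions for $\A((X_r)^r)$ and $\ell(\partial(X_r)^r)$.

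For the inequalities, I would first verify the containment $(X_r)^r \subseteq X$. Unpacking the definition of Minkowski difference, $y \in X_r$ is equivalent to $\overline{B_r(y)} \subseteq X$, and so $y + b \in X$ for every $y \in X_r$ and every $b \in \overline{B_r(0)}$. Monotonicity of area under inclusion immediately yields $\A(X) \geq \A((X_r)^r)$, and monotonicity of perimeter among nested bounded convex sets (a standard consequence of Cauchy's perimeter formula, or alternatively of the fact that the nearest-point projection onto a convex set is $1$-Lipschitz) yields $\ell(\partial X) \geq \ell(\partial(X_r)^r)$.

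For the limits, the hypothesis that $X$ is not a line segment guarantees that $X$ has nonempty interior, so there exist $x_0$ and $\rho > 0$ with $\overline{B_\rho(x_0)} \subseteq X$. I would then introduce the contraction $f_r(y) = x_0 + (1 - r/\rho)(y - x_0)$ for $0 < r < \rho$ and check, via the identity $f_r(y) + r w = (1 - r/\rho)\,y + (r/\rho)(x_0 + \rho w)$ expressing the left side as a convex combination of $y \in X$ and $x_0 + \rho w \in X$ for $|w| \leq 1$, that $f_r(X) \subseteq X_r$. Since $|y - f_r(y)| \leq (r/\rho)\,\mathrm{diam}(X)$, the Hausdorff distance between $f_r(X)$ and $X$ tends to $0$, and hence so does the Hausdorff distance from $X_r \supseteq f_r(X)$ to $X$. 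Since the perimeter functional is continuous on the space of planar convex bodies equipped with the Hausdorff metric, I obtain $\ell(\partial X_r) \to \ell(\partial X)$, and the companion statement $\ell(\partial(X_r)^r) \to \ell(\partial X)$ then follows from the identity $\ell(\partial(X_r)^r) = 2\pi r + \ell(\partial X_r)$ already established.

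The main obstacle is the limit step. The Hausdorff convergence $X_r \to X$ genuinely requires the nonempty-interior hypothesis, since for a line segment $X_r$ is empty for every $r > 0$, and passing from Hausdorff convergence to convergence of perimeters requires the Hausdorff continuity of perimeter on convex bodies, which is a classical but non-trivial external input. By contrast, the inequalities and equalities reduce to bookkeeping once the containment $(X_r)^r \subseteq X$ has been verified.
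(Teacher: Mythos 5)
Your proposal is correct, and the first two parts (the equalities via Steiner's formulae applied to the convex set $X_r$, and the inequalities via the containment $(X_r)^r\subseteq X$ plus monotonicity of area and of perimeter over nested convex sets) coincide with the paper's argument essentially line for line. Where you genuinely diverge is the limit $\ell(\partial X_r)\to\ell(\partial X)$. The paper proceeds by hand: it uses rectifiability of $\partial X$ to pick an inscribed polygonal approximation of the perimeter to within $\epsilon$, scales the vertices slightly toward an interior point so that they land in some $X_{1/N}$, takes their convex hull $C\subseteq X_{1/N}$, and concludes $\ell(\partial X_{1/N})\geq\ell(\partial C)\geq\ell(\partial X)-3\epsilon$; combined with the fact that $\ell(\partial X_{1/n})$ is increasing and bounded above by $\ell(\partial X)$, this pins down the limit. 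You instead exhibit the explicit contraction $f_r$ satisfying $f_r(X)\subseteq X_r$ (the convex-combination identity you give is correct and is the right way to see it), deduce $d_H(X_r,X)\leq(r/\rho)\,\mathrm{diam}(X)\to0$, and invoke the Hausdorff continuity of the perimeter functional on planar convex bodies. Your route is shorter and gives a clean quantitative rate for the Hausdorff convergence, but it leans on continuity of perimeter as an external black box; the paper's route is longer but more self-contained, needing only the same perimeter monotonicity for nested convex sets that both arguments already use for the inequality part. You correctly flag the role of the nonempty-interior hypothesis, and your derivation of $\ell(\partial(X_r)^r)\to\ell(\partial X)$ from the already-established identity $\ell(\partial(X_r)^r)=2\pi r+\ell(\partial X_r)$ is if anything slightly cleaner than the paper's squeeze $\ell(\partial X_r)\leq\ell(\partial(X_r)^r)\leq\ell(\partial X)$.
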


Using the second lemma, we can get an upper bound on\begin{align*}
    \frac{\int_{X\setminus X_l}p_X(x,l)}{\A(X)}
\end{align*}in terms of $l$ and $\A(X)$. Summing these bounds yields a bound on $P_X(l)$. Using the isoperimetric inequality\cite{osserman1978isoperimetric}, it can be shown that this upper bound is less than $P_\D(l)$ for sufficiently small $l$.

\begin{theorem}[Isoperimetric Inequality]{\label{iso}}
    Let $C$ be a simple curve from $\R^2$ that encloses a region $X$. Then\begin{align*}
        \ell(C)^2\geq4\pi\A(X)
    \end{align*}where equality holds if and only if $C$ is a circle.
\end{theorem}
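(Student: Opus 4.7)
The plan is to follow Hurwitz's classical Fourier-series argument. After a normalization, I would reduce the inequality to a pointwise comparison of Fourier coefficients and read off the equality case.

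First, I would assume without loss of generality that $C$ is at least $C^1$ and parametrized by arc length; the general rectifiable case follows from a standard polygonal approximation together with lower semicontinuity of length and continuity of enclosed area. By rescaling I may further assume $\ell(C)=2\pi$, so the target inequality becomes $\A(X)\le\pi$. Writing $C(s)=(x(s),y(s))$ for $s\in[0,2\pi]$, arc-length parametrization gives $x'(s)^2+y'(s)^2\equiv 1$, hence
\begin{equation*}
\int_0^{2\pi}\bigl(x'(s)^2+y'(s)^2\bigr)\,ds=2\pi.
\end{equation*}

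Next, I would expand $x$ and $y$ in Fourier series on $[0,2\pi]$,
\begin{equation*}
x(s)=\tfrac{a_0}{2}+\sum_{n\ge 1}\bigl(a_n\cos ns+b_n\sin ns\bigr),\qquad y(s)=\tfrac{c_0}{2}+\sum_{n\ge 1}\bigl(c_n\cos ns+d_n\sin ns\bigr),
\end{equation*}
differentiate term by term, and apply Parseval's identity to turn the arc-length normalization into the coefficient identity
\begin{equation*}
\sum_{n\ge 1}n^2\bigl(a_n^2+b_n^2+c_n^2+d_n^2\bigr)=2.
\end{equation*}
Simultaneously, Green's theorem gives $\A(X)=\tfrac{1}{2}\int_0^{2\pi}(xy'-yx')\,ds$, and Parseval (now applied to the product) converts this to
\begin{equation*}
\A(X)=\pi\sum_{n\ge 1}n\bigl(a_nd_n-b_nc_n\bigr).
\end{equation*}

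The key algebraic step is to combine these two identities into a manifestly nonnegative expression:
\begin{equation*}
2\pi-2\A(X)=\pi\sum_{n\ge 1}\Bigl[(na_n-d_n)^2+(nb_n+c_n)^2+(n^2-1)\bigl(c_n^2+d_n^2\bigr)\Bigr].
\end{equation*}
Every summand is nonnegative because $n\ge 1$, which yields $\A(X)\le\pi$ and hence $\ell(C)^2\ge 4\pi\A(X)$.

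For the equality case, I would examine when each bracket vanishes. The factor $(n^2-1)$ forces $c_n=d_n=0$ for all $n\ge 2$, after which the two square terms force $a_n=b_n=0$ for $n\ge 2$; for $n=1$ they give $a_1=d_1$ and $b_1=-c_1$. Substituting back shows
\begin{equation*}
x(s)-\tfrac{a_0}{2}=a_1\cos s+b_1\sin s,\qquad y(s)-\tfrac{c_0}{2}=-b_1\cos s+a_1\sin s,
\end{equation*}
so $(x(s)-\tfrac{a_0}{2})^2+(y(s)-\tfrac{c_0}{2})^2=a_1^2+b_1^2$, and $C$ is a circle. The main obstacle I anticipate is the reduction from a general rectifiable simple curve to the smooth arc-length parametrized setting where the Fourier manipulations are valid; this is handled by approximating $C$ by inscribed polygons, using that enclosed area is continuous under Hausdorff convergence of Jordan curves while perimeter is only lower semicontinuous, which preserves the direction of the inequality in the limit.
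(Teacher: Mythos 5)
Your proposal is a correct and complete rendition of Hurwitz's Fourier-series proof, but note that the paper does not prove Theorem \ref{iso} at all: it is imported as a classical fact with a citation to Osserman's survey, and is used only as a black box (specifically, to get $\A(X)<\pi$ strictly when $X$ is not a disk, which makes $h'(0)>0$ in the proof of Theorem \ref{main}). So there is nothing in the paper to compare against line by line; what your write-up buys is self-containedness. The algebra checks out: Parseval applied to $x'^2+y'^2\equiv 1$ gives $\sum n^2(a_n^2+b_n^2+c_n^2+d_n^2)=2$, Green's theorem plus Parseval gives $\A(X)=\pi\sum n(a_nd_n-b_nc_n)$, and your identity
\begin{equation*}
2\pi-2\A(X)=\pi\sum_{n\ge 1}\Bigl[(na_n-d_n)^2+(nb_n+c_n)^2+(n^2-1)\bigl(c_n^2+d_n^2\bigr)\Bigr]
\end{equation*}
expands correctly, with the equality analysis identifying the circle (and the normalization $\sum n^2(\cdots)=2$ even pins down radius $1$, consistent with $\ell(C)=2\pi$). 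One small improvement: your reduction ``prove it for $C^1$ curves, then approximate'' transmits the \emph{inequality} to general rectifiable curves but not automatically the \emph{equality case}, since a non-circle could in principle be a limit of curves along which the isoperimetric deficit tends to zero. This is easily repaired by skipping the approximation entirely: the arc-length parametrization of a rectifiable Jordan curve is Lipschitz, hence $x,y\in H^1$, term-by-term differentiation of the Fourier series is valid in $L^2$, and Green's formula for the area holds for Lipschitz boundaries, so the whole argument, including the equality analysis, runs directly on $C$ itself.
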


\section{Proof of Main Result}

\begin{proof}[Theorem \ref{main}]
    If $X$ is a line segment, $P_X(l)=0$ for all $l>0$, so $P_X(l)<P_\D(l)$ for all $0<l<2$. As such, assume $X$ is not a line segment. By Lemma \ref{pointwise},\begin{align*}
        P_X(l)=\frac{1}{\A(X)}\int_Xp_X(x,l)dx&=\frac{1}{\A(X)}\int_{X_l}p_X(x,l)dx+\frac{1}{\A(X)}\int_{X\setminus X_l}p_X(x,l)dx \\
        &\leq\frac{\A(X_l)}{\A(X)}+\frac{2\pi l-2l}{\A(X)}
    \end{align*}and by Lemma \ref{steiner},\begin{align*}
        \A(X_l)\leq\A(X)+\pi l^2-\ell(\partial(X_l)^l)l.
    \end{align*}
    Therefore\begin{align*}
        P_X(l)\leq\frac{\A(X)+\pi l^2-2l+l(2\pi-\ell(\partial(X_l)^l))}{\A(X)},
    \end{align*}
    so by Lemma \ref{disk},\begin{align*}
        P_\D(l)-P_X(l)&\geq\frac{2}{\pi}\(\arccos\(\frac{l}{2}\)-\frac{l}{2}\sqrt{1-\frac{l^2}{4}}\)-\frac{\A(X)+\pi l^2-2l+l(2\pi-\ell(\partial(X_l)^l))}{\A(X)} \\
        &=h(l)+l\frac{\ell(\partial(X_l)^l)-2\pi}{\A(X)}
    \end{align*}where $h:(-2,2)\to\R$ is given by\begin{align*}
        h(l)=\frac{2}{\pi}\(\arccos\(\frac{l}{2}\)-\frac{l}{2}\sqrt{1-\frac{l^2}{4}}\)-\frac{\A(X)+\pi l^2-2l}{\A(X)}.
    \end{align*}
    Note that\begin{align*}
        h'(l)=\frac{2-2l\pi}{\A(X)}-\frac{\sqrt{4-l^2}}{\pi},
    \end{align*}so $h'(0)=\frac{2}{\A(X)}-\frac{2}{\pi}>0$ by Theorem \ref{iso}. By Lemma \ref{steiner}, we can choose $\delta>0$ such that\begin{align*}
        \left|\frac{\ell(\partial(X_l)^l)-2\pi}{\A(X)}\right|<\frac{h'(0)}{2}
    \end{align*}when $0<l<\delta$. Then for such an $l$,\begin{align*}
        P_\D(l)-P_X(l)\geq h(l)-l\frac{h'(0)}{2},
    \end{align*}
    but since $P_\D(0)=P_X(0)=1$, this inequality actually holds for $l\in[0,\delta)$. The derivative of the LHS with respect to $l$ is $h'(l)-\frac{h'(0)}{2}$. Evaluating at 0, we get $\frac{h'(0)}{2}>0$. Thus, for some $\epsilon>0$, we have\begin{align*}
        0<h(l)-l\frac{h'(0)}{2}\leq P_\D(l)-P_X(l)
    \end{align*}for $l\in(0,\epsilon)$. This is the desired result.
\end{proof}

\section{Proofs of Lemmas}

\begin{proof}[Lemma \ref{disk}]
    Let $\Omega$ be the set of oriented needles of length $l$ originating in $\D$. Define $\chi:\Omega\to[0,1]$ by $\chi(n)=1$ if both endpoints of $n$ lie in $\D$, and 0 otherwise. Then, if we let $dK$ denote the kinematic density \cite{Santaló_2004} in $\R^2$, our Buffon probability is given by\begin{align*}
        P_\D(l)=\frac{\int_\Omega\chi dK}{\int_\Omega dK}.
    \end{align*}
    Represent $n\in\Omega$ by $(x,\theta)\in\R^2\times[0,2\pi)$, where $n$ originates at $x$ with angle $\theta$ with respect to the horizontal axis. Then we have that $dK=dx\wedge d\theta$, so\begin{align*}
        P_\D(l)=\frac{\int_0^{2\pi}\int_\D\chi(x,\theta)dxd\theta}{\int_0^{2\pi}\int_\D dxd\theta}=\frac{1}{2\pi^2}\int_0^{2\pi}\int_\D\chi(x,\theta)dxd\theta.
    \end{align*}
    Note that\begin{align*}
        \int_\D\chi(x,\theta)dx=\int_\D\chi(x,0)dx.
    \end{align*}
    Let $\D^+$ and $\D^-$ denote the upper and lower half-disks. Then\begin{align*}
        \int_\D\chi(x,0)dx&=\int_{\D^+}\chi(x,0)dx+\int_{\D^-}\chi(x,0)dx \\
        &=2\int_{\D^+}\chi(x,0)dx.
    \end{align*}
    For $x=(x_1,x_2)\in\D^+$, $\chi((x_1,x_2),0)$ is 1 if $-\sqrt{1-x_2^2}\leq x_1\leq\sqrt{1-x_2^2-l}$. Thus\begin{align*}
        \int_{\D^+}\chi(x,0)dx&=\int_0^{\sqrt{1-\(\frac{l}{2}\)^2}}\int_{-\sqrt{1-x_2^2}}^{\sqrt{1-x_2^2-l}}dx_1dx_2 \\
        &=\int_0^{\sqrt{1-\(\frac{l}{2}\)^2}}2\sqrt{1-x_2^2}-ldx_2 \\
        &=\arccos\(\frac{l}{2}\)-\frac{l}{2}\sqrt{1-\frac{l^2}{4}}.
    \end{align*}
    As such,\begin{align*}
        P_\D(l)=\frac{2}{\pi}\(\arccos\(\frac{l}{2}\)-\frac{l}{2}\sqrt{1-\frac{l^2}{4}}\).
    \end{align*}
\end{proof}

\begin{proof}[Lemma \ref{pointwise}]
    Fix $x\in X\setminus(X_l\cup\partial X)$. Let $y_x\in\partial X$ be such that $\inf_{z\in\partial X}|x-z|=|x-y_x|$. Now, let $p$ be the endpoint of a needle of length $l$ originating at $x$ and rotated $\arccos\(\frac{|x-y_x|}{l}\)$ radians clockwise and let $q$ be the endpoint obtained by rotating counterclockwise. Then, $p$, $q$, and $y_x$ are colinear. We can assume that $x$ is the origin and $y_x$ is on the vertical axis. By this assumption, the coordinates of our points are\begin{align*}
        x&=(0,0),&q&=\(-\sqrt{l^2-|y_x-x|^2},|y_x-x|\),\\y_x&=(0,|y_x-x|),&p&=\(\sqrt{l^2-|y_x-x|^2},|y_x-x|\).
    \end{align*}
    Let $a$ denote the arc between $p$ and $q$ of length $2\arccos\(\frac{|y_x-x|}{l}\)$ that goes counterclockwise from $p$ to $q$. Suppose by way of contradiction that some point $z\neq p,q$ is countained in $X\cap a$. If the first coordinate of $z$ is zero, choose points $x'$ and $x''$ in $X$ to the left and right of $x$. Then the triangle $\triangle x'x''z$ are contained in $X$, but this interior contains $y_x$, which is a contradiction, since $y_x$ is on the boundary of $X$! If the first coordinate of $z$ is nonzero, assume without loss of generality that the first coordinate is positive. Consider the line segment $\overline{zy_x}$, which is contained in $X$ by convexity. Convexity and the assumption that $y_x\in\partial X$ restricts the boundary of $X$ to the second quadrant. If $(b_1,b_2)\in X$, $b_1<0$ and $b_2\geq0$, we have $b_2\leq Z(b_1)$ where $Z$ is the graph obtained by extending the segment $\overline{zy_x}$. Note that a circle of radius of strictly less than $l$ centered at $x$ intersects $Z$ in the second quadrant, hence there is a point in $\partial A$ that is strictly closer to $x$ than $y_x$. This is a contradiction, since $|y_x-x|$ is minimal! Thus, the arc $a$ only intersects $X$ at the points $p$ and $q$. Hence\begin{align*}
        p_X(x,l)\leq\frac{1}{2\pi}\(\pi+2\arcsin\(\frac{|x-y_x|}{l}\)\).
    \end{align*}
    Note that, for a fixed $t$, $p_X(q,l)$ is bounded above for $q\in\partial X_t$ by\begin{align*}
        g_X(t,l)=\frac{1}{2\pi}\(\pi+2\arcsin\(\frac{t}{l}\)\).
    \end{align*}
    Since $X_t\subset X$, we have $\ell(\partial X_l)\leq\ell(\partial X)$, so\begin{align*}
        \int_{X\setminus X_l}p_X(x,l)dxdt&=\int_{X\setminus(X_l\cup\partial A)} \\
        &\leq\int_0^l\ell(\partial X_t)g_X(t,l)dt \\
        &\leq\int_0^l\(\pi+2\arcsin\(\frac{t}{l}\)\) \\
        &=2\pi l-2l,
    \end{align*}as desired. Note that the first inequality holds because $\partial A$ has measure 0.
\end{proof}

For the proof of Lemma \ref{steiner}, we first state Steiner's formlae \cite{SteinForm}.

\begin{theorem}[Steiner's Formulae]
    Let $X$ be a compact convex subset of $\R^2$. For $r>0$,\begin{align*}
        \A(X^r)&=\pi r^2+\ell(\partial X)r+\A(X), \\
        \ell(\partial X^r)&=2\pi r+\ell(\partial X).
    \end{align*}
\end{theorem}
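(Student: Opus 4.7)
The plan is to prove both Steiner identities first for convex polygons by an explicit geometric decomposition, and then to extend to arbitrary compact convex sets by a Hausdorff limiting argument.

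For a convex polygon $P$ with edges of lengths $\ell_1,\dots,\ell_n$ and exterior turning angles $\theta_1,\dots,\theta_n$, I would decompose $P^r = P + \overline{B_r(0)}$ into three pairwise disjoint regions: the polygon $P$ itself; $n$ rectangular strips of dimensions $\ell_i \times r$, one erected outwardly on each edge; and $n$ circular sectors of radius $r$ with central angles $\theta_i$ based at the vertices. Because $P$ is a closed convex polygon, the exterior angles satisfy $\sum_{i=1}^n \theta_i = 2\pi$, so the sectors contribute total area $\pi r^2$ and total arc length $2\pi r$, while the strips contribute area $r\,\ell(\partial P)$ to $P^r$ and straight-segment length $\ell(\partial P)$ to $\partial P^r$. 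Summing these pieces gives $\A(P^r) = \A(P) + \ell(\partial P)\, r + \pi r^2$ and $\ell(\partial P^r) = \ell(\partial P) + 2\pi r$ directly.

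To pass to a general compact convex $X$, I would approximate $X$ by a sequence of inscribed convex polygons $P_k \to X$ in Hausdorff distance (obtained, for instance, by taking successively denser finite sets of vertices on $\partial X$). Since Minkowski sum with a fixed compact set is continuous in the Hausdorff metric, $P_k^r \to X^r$ as well. On the class of planar convex bodies both area and perimeter are continuous under Hausdorff convergence, so taking $k \to \infty$ in the polygonal identities yields the corresponding formulae for $X$.

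The main obstacle will be continuity of the perimeter, since $\ell(\partial \cdot)$ is only lower semicontinuous in general. One clean workaround uses Cauchy's formula, which expresses the perimeter of a planar convex body as $\int_0^{2\pi} h_X(\theta)\, d\theta$ in terms of its support function; since $h_{P_k} \to h_X$ uniformly under Hausdorff convergence, continuity of perimeter on the convex class is automatic. Alternatively, one can sandwich $X$ between inscribed and circumscribed convex polygons and squeeze the limit using the monotonicity of perimeter on nested convex sets. Continuity of area is immediate from dominated convergence applied to indicator functions.
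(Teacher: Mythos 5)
The paper contains no proof of this statement: Steiner's formulae are quoted as a known classical result with a citation to \cite{SteinForm} and are then used as a black box in the proof of Lemma \ref{steiner}. So there is no internal argument to compare against; what you have written is the standard textbook proof, and it is essentially correct. The polygonal decomposition is right, and the key structural fact you are implicitly using is that for a convex polygon the outward normal cone at each vertex has angle equal to the exterior angle there, which is exactly why the $n$ sectors are pairwise disjoint from each other and from the edge rectangles, why they assemble into a full disk of radius $r$ (giving the $\pi r^2$ and $2\pi r$ terms), and why convexity is indispensable --- for a nonconvex polygon the pieces overlap and the formula fails. Two points deserve slightly more care than you give them. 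First, continuity of area is not literally ``immediate from dominated convergence applied to indicator functions,'' since Hausdorff convergence alone does not give pointwise convergence of indicators; the clean statement is that for nested inscribed polygons $P_k\subset P_{k+1}\subset X$ the areas converge monotonically because $\partial X$ is Lebesgue-null for a convex body, and likewise $\A(P_k^r)\to\A(X^r)$ since $P_k^r\subset X^r$ are again nested convex bodies. Second, the theorem as stated admits degenerate compact convex sets (points and segments), which have no inscribed nondegenerate polygons; these must either be excluded, as the paper implicitly does elsewhere by treating segments separately, or checked by hand ($X^r$ is then a disk or a stadium, and the formulae hold with the convention that a segment of length $L$ has boundary length $2L$). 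Your handling of the perimeter limit is the right instinct: perimeter is only lower semicontinuous on general compacta, but on convex bodies Cauchy's formula $\ell(\partial X)=\int_0^{2\pi}h_X(\theta)\,d\theta$ together with the fact that Hausdorff distance equals the sup-norm distance of support functions makes it continuous, and $h_{P_k^r}=h_{P_k}+r\to h_X+r=h_{X^r}$ uniformly disposes of the dilated bodies as well.
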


\begin{proof}[Lemma \ref{steiner}]
    Let $X$ be a compact convex subset of $\R^2$. It follows from the definitions of interior and exterior parallels that $(X_r)^r\subset X$. Since $X_r$ is convex, Steiner's formulae give\begin{align*}
        \A((X_r)^r)&=\pi r^2+\ell(\partial X_r)r+\A(X_r), \\
        \ell(\partial(X_r)^2)&=2\pi r+\ell(\partial X_r).
    \end{align*}
    Since $(X_r)^r$ is convex and contained in $X$, $\A(X)\geq\A((X_r)^r)$ and $\ell(\partial X)\geq\ell(\partial(X_r)^r)$. As such,\begin{align*}
        \A(X)&\geq\pi r^2+\ell(\partial X_r)r+\A(X_r), \\
        \ell(\partial X)&\geq 2\pi r+\ell(\partial X_r).
    \end{align*}
    This is the first of the desired results.

    Now, assume that $X$ is not a line or line segment, and note that\begin{align*}
        \bigcup_{n\in\N}X_{1/n}=X\setminus\partial X.
    \end{align*}
    Let $f:[0,1]\to\R^2$ be a parameterization of $\partial X$. Fix some $\epsilon>0$. Since $\partial X$ is bounded and convex, it is rectifiable, so we can take some partition $\{t_0,\ldots,t_k\}$ of $[0,1]$ such that\begin{align*}
        \left|\ell(\partial X)-\sum_{i=1}^k|f(t_i)-f(t_{i-1})\right|<\epsilon.
    \end{align*}
    Since $X$ is not a line or line segment, we can assume that 0 is in the interior of $X$. Then for $0<c<1$, the set $cX=\{cx:x\in X\}$ is convex and contained in $X\setminus\partial X$. Since $\{f(t_1),\ldots,f(t_k)\}$ is a finite set, we can choose $c$ large enough such that $|cf(t_i)-f(t_i)|<\frac{\epsilon}{k}$. Since $X_{1/n}\subset X_{1/(n+1)}$ and $\{cf(t_1),\ldots,cf(t_k)\}\subset X\setminus\partial X$, it must be that $\{cf(t_1),\ldots,cf(t_k)\}\subset X_{1/N}$ for some $N$. Since $A_{1/N}$ is convex, it contains the convex hull of the points $cf(t_i)$. Denote this set by $C$. Now we have\begin{align*}
        \ell(\partial A_{1/N})&\geq\ell(\partial C) \\
        &=\sum_{i=1}^k|cf(t_i)-cf(t_{i-1})| \\
        &\geq-2\epsilon+\sum_{i=1}^k|f(t_i)-f(t_{i-1})| \\
        &\geq\ell(\partial X)-3\epsilon.
    \end{align*}
    Since $\ell(\partial X_{1/n})$ is an increasing sequence bounded above by $\ell(\partial X)$, the above shows that we must have $\lim_{n\to\infty}\ell(\partial X_{1/n})=\ell(\partial X)$. The other limit follows from the inequality\begin{align*}
        \ell(\partial X_r)\leq\ell(\partial(X_r)^r)\leq\ell(\partial X).
    \end{align*}
\end{proof}

\section{Simulations and Future Work}

\begin{figure}[h!]
    \centering
    \includegraphics[width=0.5\linewidth]{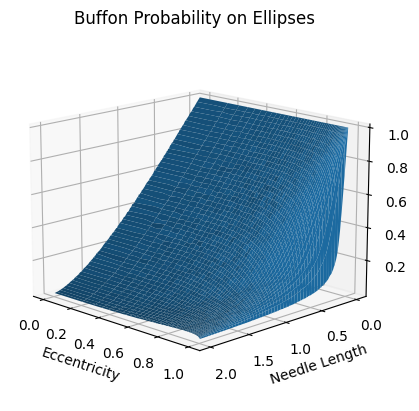}
    \caption{Buffon Probability by Eccentricity}
    \label{fig:2}
\end{figure}

Figure \ref{fig:2} graphs the simulated Buffon probability of ellipses on the vertical axis against their eccentricity and needle length on the horizontal axes. This graph suggests that the Buffon probability of the circle begins to dominate for needle lengths around $\frac{3}{2}$. These numerical results suggest the existence of a global result stronger than Theorem \ref{main}. With this in mind, we make the following conjecture.

\begin{conjecture}
    Let $\D$ be the unit disk. There exists some $\delta>0$ such that for any non-disk, compact convex set $X\in\R^2$ where $\ell(\partial X)=2\pi$ and $0<l<\delta$, we have $P_\D>P_X(l)$.
\end{conjecture}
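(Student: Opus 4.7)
The plan is to upgrade Theorem \ref{main} to a uniform statement by splitting the admissible family of convex bodies into a ``far'' regime (controlled by compactness) and a ``near'' regime (controlled by a second-order expansion at the disk). The main obstruction in the proof of Theorem \ref{main} is that the linear margin $h'(0)=2(\pi-\A(X))/(\pi\A(X))$ vanishes as $X$ approaches $\D$, so the $O(l^2)$ error terms can dominate for nearly circular $X$. Overcoming this requires showing that the quadratic-in-$l$ corrections to $P_\D(l)-P_X(l)$ vanish at the same rate as the linear term, uniformly in the shape of $X$.

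For the far regime, fix a threshold $\mu_0>0$ and set $\mathcal{K}_{\mu_0}=\{X:\ell(\partial X)=2\pi,\,\pi-\A(X)\geq\mu_0\}$. By the Blaschke selection theorem this family is compact modulo translation in the Hausdorff metric, and on it the linear margin is uniformly $\geq 2\mu_0/\pi^2$. Using continuity together with a uniform version of the convergence $\ell(\partial(X_l)^l)\to 2\pi$ from Lemma \ref{steiner}, the quadratic error in the proof of Theorem \ref{main} can be bounded by a constant depending only on $\mu_0$. This produces a uniform $\delta_1(\mu_0)>0$ on the far regime.

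For the near regime $\pi-\A(X)<\mu_0$, I would parameterize $X$ by its support function $h_X(\theta)=1+\epsilon g(\theta)$, with $g$ orthogonal in $L^2$ to $1,\cos\theta,\sin\theta$ (mean zero from the perimeter constraint, first Fourier harmonics killed by a translation gauge). A Fourier calculation then gives $\pi-\A(X)=\epsilon^2 Q(g)+O(\epsilon^3)$ with $Q(g)=(\pi/2)\sum_{k\geq 2}(k^2-1)(a_k^2+b_k^2)$. The first-order-in-$\epsilon$ variation of $P_X(l)$ at $X=\D$ vanishes for admissible $g$: $\A(\D\cap(\D-lv))$ perturbs at first order by $\epsilon$ times the integral of $g$ over two arcs of $\partial\D$ of angular span $2\arccos(l/2)$, and averaging over the needle direction $v$ annihilates every nonzero Fourier mode of $g$. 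Hence
\begin{equation*}
P_X(l)-P_\D(l)=\epsilon^2\,L_2(l)[g]+O(\epsilon^3),
\end{equation*}
where $L_2(l)$ is a quadratic form in $g$ diagonal in the Fourier basis (by the $O(2)$-symmetry of $\D$) with $L_2(0)[g]=0$ and $\partial_l L_2(0)[g]=-2Q(g)/\pi^2<0$ for admissible $g\neq 0$ (matching the first-order isoperimetric contribution). Writing $L_2(l)[g]=\sum_{k\geq 2}\ell_k(l)(a_k^2+b_k^2)$, a uniform $\delta_2>0$ with $L_2(l)[g]<0$ on $(0,\delta_2)$ follows once each $\ell_k(l)$ is shown to be negative on $(0,\delta_2)$ uniformly in $k$; since $\ell_k'(0)=-(k^2-1)/\pi$, it suffices to bound the higher-$l$-derivatives of $\ell_k$ at $0$ by a universal constant times $k^2-1$. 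The conjecture then follows by taking $\delta=\min(\delta_1(\mu_0),\delta_2)$ for an appropriate $\mu_0$.

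The main obstacle will be establishing the uniform mode-wise bound on $\ell_k(l)$, which requires the explicit $O(\epsilon^2)$ expansion of $\A(X\cap(X-lv))$ in the support-function perturbation $g$ (tracking how the two arcs that form $\partial(X\cap(X-lv))$ deform to second order). The computation is technically heavy but, by $O(2)$ symmetry, reduces to a family of one-dimensional problems indexed by $k$. As a sanity check, the $k=2$ mode can be obtained directly from the ellipse family using affine covariance of the Buffon functional, giving $\ell_2(l)=-(3/\pi)l+(11/(32\pi))l^3+O(l^5)$, which is negative for $l<\sqrt{96/11}\approx 2.95$ — consistent with a uniform mode-wise bound and with the range suggested by the simulations in Figure \ref{fig:2}.
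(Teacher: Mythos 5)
Note first that the statement you are addressing is presented in the paper only as a conjecture: the authors give numerical evidence (Figure \ref{fig:2}) and a heuristic about the non-sharpness of Lemma \ref{pointwise}, but no proof. So there is no argument of theirs to compare against, and your proposal must stand on its own. As a program it is sensible, and your leading-order computations are consistent: the first variation of $P_X(l)$ at the disk does vanish for mean-zero $g$, and the coefficient $-2Q(g)/\pi^2$ of $\epsilon^2 l$ matches the expansion $P_X(l)=1-2l/\A(X)+O(l^2)$ combined with $\pi-\A(X)=\epsilon^2Q(g)+O(\epsilon^3)$. But the proposal is not a proof; it has two genuine gaps.

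In the far regime, the ``uniform version of the convergence $\ell(\partial(X_l)^l)\to2\pi$'' you invoke is false on $\mathcal{K}_{\mu_0}$. That family contains bodies of arbitrarily small width $w$; for such a body $X_l=\emptyset$ as soon as $l>w/2$, so $\ell(\partial(X_l)^l)=0$ and the error term $l\,(2\pi-\ell(\partial(X_l)^l))/\A(X)=2\pi l/\A(X)$ overwhelms the linear margin in the paper's estimate, making the bound on $P_X(l)$ exceed $1$. Blaschke compactness does not repair this, because $(X,l)\mapsto\ell(\partial X_l)$ is not jointly continuous at a degenerate segment and $l=0$. You need a third regime for thin bodies, handled by a different estimate (for instance, every point of a body of width $w$ lies within $w/2$ of $\partial X$, so the pointwise bound from the proof of Lemma \ref{pointwise} gives $p_X(x,l)\le\frac{1}{2}+\frac{1}{\pi}\arcsin(w/(2l))$, which is bounded away from $P_\D(l)\to1$); this is absent from your write-up. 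In the near regime, essentially all of the content of the conjecture sits in the two steps you defer: (i) the uniform mode-wise bound on $\ell_k(l)$ is delicate because the competing oscillatory contributions, schematically $k\sin(k\alpha(l))$ with $\alpha(l)\sim l$, are of exactly the same order $k^2l$ as the good term $-(k^2-1)l/\pi$, so the constants must actually be computed; and (ii) the $O(\epsilon^3)$ remainder must be controlled uniformly over all convex bodies near the disk, whose support-function perturbations $g$ are in general only continuous (with $g''+g$ bounded below as a measure), so $\sum_k k^2(a_k^2+b_k^2)$ need not be small even when $\epsilon\|g\|_\infty$ is --- this is a Fuglede-type stability analysis that has to be carried out, not assumed. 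Finally, your sanity check relies on ``affine covariance of the Buffon functional,'' but $P_X(l)$ is not affine covariant: the image of a length-$l$ needle under a linear map has direction-dependent length, so the ellipse value of $\ell_2(l)$ cannot be read off from the disk by a change of variables without reworking the directional average.
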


The existence of this sharper result is also expected from a theoretical standpoint, since the convexity bound used in Lemma \ref{pointwise} is not sharp. For this reason, we expect that future work towards this result will rely on a closer inspection of the boundary of compact, convex sets $X\in\R^2$. This bound in Lemma \ref{pointwise} assumes the worst local case, that the boundary is flat. Assuming that $\partial X$ is sufficiently smooth, the global properties of its curvature should yield more precise bounds on $P_X(l)$.

Another possible direction of future work is the extension of this result to multiple directions. We state here a generalization of Steiner's formulae to multiple dimensions \cite{Dacorogna}.

\begin{theorem}(Minkowski-Steiner Formula)
    Let $X\subset\R^n$ be a convex set, $\mu$ denote the $n$ dimensional Lebesgue measure (volume), and $\lambda$ denote the $(n-1)$ dimensional measure. Then\begin{align*}
        \mu\(X+\overline{B_\delta}\)=\mu(X)+\lambda(\partial X)\delta+\sum_{i=2}^{n-1}\lambda_i(X)\delta^i+\omega_n\delta^n,
    \end{align*}where $\lambda_i$ are quermassintegrals of $X$ and $\omega_n$ denotes the measure of the unit ball in $\R^n$.
\end{theorem}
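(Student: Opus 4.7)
The strategy is to establish that $\delta \mapsto \mu(X + \overline{B_\delta})$ is a polynomial of degree $n$ in $\delta \geq 0$ for every convex body $X \subset \R^n$, and then to identify each coefficient. I would first prove this for convex polytopes by an explicit geometric decomposition, and then extend to arbitrary convex bodies by approximation.

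For a convex polytope $P$, I would partition $P + \overline{B_\delta}$ according to which face of $P$ is nearest to a given point. This produces three kinds of pieces: the polytope $P$ itself; over each $(n-1)$-dimensional face $F$, a right prism with base $F$ and height $\delta$; and over each lower-dimensional face $G$ of dimension $k$, a piece isometric to $G \times C_G$, where $C_G$ is a spherical sector of radius $\delta$ inside the $(n-k)$-dimensional normal cone of $G$. Each such sector has volume a constant multiple of $\delta^{n-k}$, with the constant being the normalized solid angle of the normal cone. Summing contributions face by face gives
\[
\mu(P + \overline{B_\delta}) = \mu(P) + \lambda(\partial P)\delta + \sum_{i=2}^{n-1} c_i(P)\delta^i + \omega_n \delta^n,
\]
where the $\omega_n \delta^n$ term arises because the normal cones at all vertices tile the unit sphere $S^{n-1}$, so their radius-$\delta$ sectors together fill a complete ball of radius $\delta$.

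Next, I would approximate a general convex body $X$ in Hausdorff distance by convex polytopes $P_m \to X$. Minkowski sum with $\overline{B_\delta}$, Lebesgue measure, and surface area are all continuous functionals on convex bodies in the Hausdorff metric. Passing to the limit in the polytope formula at each fixed $\delta$ yields the desired identity, with the $i$-th intermediate coefficient serving as the definition of the quermassintegral $\lambda_i(X)$.

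The main obstacle is showing that the intermediate coefficients $c_i(P_m)$ converge to a limit depending only on $X$ and not on the choice of approximating polytopal sequence. The cleanest route is via Minkowski's theorem on mixed volumes: the function $\mu(s_1 K_1 + \cdots + s_r K_r)$ is a homogeneous polynomial of degree $n$ in nonnegative reals $s_j$, with symmetric coefficients $V(K_{i_1},\dots,K_{i_n})$. Applied to $K_1 = X$, $K_2 = \overline{B_1}$, together with the homogeneity of the Minkowski sum in $\delta$, this gives the expansion directly with $\lambda_i(X) = \binom{n}{i} V(X[n-i],\, \overline{B_1}[i])$, bypassing the need for a separate convergence argument for each coefficient. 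Minkowski's theorem itself is proved by first verifying polynomiality on polytopes through the decomposition above and then using continuity of mixed volumes under Hausdorff limits, which is the technical heart of the argument.
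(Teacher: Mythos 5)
The paper does not actually prove this theorem: it is quoted in the ``Simulations and Future Work'' section as a known result (with a citation to Dacorogna) to motivate a higher-dimensional generalization, so there is no in-paper argument to compare yours against. Judged on its own, your sketch is the standard and correct proof of the Minkowski--Steiner formula. The polytope decomposition is right: the piece of $P+\overline{B_\delta}$ whose nearest point lies in the relative interior of a $k$-dimensional face $G$ is isometric to $G\times(N(G)\cap\overline{B_\delta})$ with $N(G)$ the $(n-k)$-dimensional normal cone, hence has volume homogeneous of degree $n-k$ in $\delta$; the facets give the surface-area term, and the vertex normal cones tile the unit sphere, which yields the $\omega_n\delta^n$ term.

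One comment on the passage to general convex bodies: you make the ``main obstacle'' heavier than it needs to be. Once you know that $\mu(P_m+\overline{B_\delta})$ is a polynomial of degree at most $n$ in $\delta$ for each approximating polytope $P_m$, and that these values converge to $\mu(X+\overline{B_\delta})$ for every fixed $\delta\geq0$ (Hausdorff convergence is preserved by adding the common summand $\overline{B_\delta}$, and volume is continuous), then convergence at $n+1$ distinct values of $\delta$ already forces the coefficients to converge, by inverting a Vandermonde matrix; the limits depend only on $X$ because the limit function does. So Minkowski's mixed-volume theorem is not needed to establish the expansion itself, only to identify the coefficients with the standard quermassintegrals $\binom{n}{i}V(X[n-i],\overline{B_1}[i])$, which is how the $\lambda_i$ in the statement should be read. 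Two small points to tighten: the statement says ``convex set'' but you need $X$ compact (a convex body) for the volumes to be finite and for polytope approximation to apply; and identifying the degree-one coefficient with $\lambda(\partial X)$ for a general convex body uses continuity of surface area in the Hausdorff metric, or else should be taken as the definition of $\lambda(\partial X)$ via Minkowski content.
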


\end{document}